\newcommand{\A}{\mathcal{A}}
\newcommand{\NCT}{\mathcal T}
\newcommand{\Cstar}{\ensuremath{{\mathrm C}^\ast}}
\newcommand{\ue}{\textrm{e}}
\newcommand{\ui}{\textrm{i}}
\newtheorem*{proposition*}{Proposition}
\theoremstyle{definition}
\newtheorem*{remark*}{Remark}
\begin{document}

\title[Nonsimplicity of certain universal $\Cstar$-algebras ]{Nonsimplicity of certain universal $\boldsymbol\Cstar$-algebras}


\author{Marcel de Jeu}
\address{Mathematical Institute\\
Leiden University\\
P.O.\ Box 9512\\
2300 RA Leiden\\
the Netherlands}
\email{mdejeu@math.leidenuniv.nl}

\author{Rachid El Harti}
\address{Department of Mathematics and Computer Sciences\\
Faculty of Sciences and Techniques\\
University Hassan I \\
BP 577 Settat\\
Morocco}
\email{rachid.elharti@uhp.ac.ma}

\author{Paulo R.\ Pinto}
\address{Department of Mathematics\\
CAMGSD\\
Instituto Superior T\'{e}cnico\\
 University of Lisbon\\
Av.\ Rovisco Pais 1\\
1049-001 Lisboa\\
 Portugal}
\email{ppinto@math.tecnico.ulisboa.pt}

\subjclass[2010]{Primary 46L99; Secondary 22D25}
\keywords{Universal \Cstar-algebra, nonsimplicity}




\begin{abstract}
Given $n\geq 2$, $z_{ij}\in\mathbb{T}$ such that $z_{ij}=\overline z_{ji}$ for $1\leq i,j\leq n$ and $z_{ii}=1$ for $1\leq i\leq n$, and integers $p_1,...,p_n\geq 1$,
we show that the universal \Cstar-algebra generated by unitaries $u_1,...,u_n$ such that $u_i^{p_i}u_j^{p_j}=z_{ij}u_j^{p_j}u_i^{p_i}$ for $1\leq i,j \leq n$ is not simple if at least one exponent $p_i$ is at least two. We indicate how the method of proof by `working with various quotients' can be used to establish nonsimplicity of universal \Cstar-algebras in other cases.
\end{abstract}

\maketitle

Let $n\geq 1$, let $\theta=(\theta_{ij})$ be a skew symmetric real $n\times n$ matrix, and let $z$ be the matrix defined by $z_{ij}=\ue^{2\pi \ui \theta_{ij}}$ for $1\leq i,j\leq n$. The $n$-dimensional noncommutative torus $\NCT_z$ is the universal \Cstar-algebra that is generated by unitaries $u_1,\ldots,u_n$ such that $u_i u_j =z_{ij}u_j u_i$ for $1\leq i,j\leq n$. It is known that $\NCT_z$ is simple if and only if the matrix $\theta$ is nondegenerate, i.e.\ if and only if it has the property that, whenever $x\in\mathbb{Z}^n$ satisfies $\ue^{2\pi \ui \langle x, \theta y\rangle}=1$ for all $y\in \mathbb{Z}^n$, then $x=0$; see \cite[Theorem~1.9]{phillips1} and \cite[Theorem~3.7]{Slawny}.

The \Cstar-algebra $\NCT_z$ is a deformation of the group \Cstar-algebra of $\mathbb Z^n$. It seems natural to consider other families of such deformed group \Cstar-algebras, and, in particular, universal \Cstar-algebras that are obtained by allowing higher powers in the relations for $\NCT_z$. Therefore, given $n\geq 2$ (the case $n=1$ is clear), $z_{ij}\in\mathbb{T}$ such that $z_{ij}=\overline z_{ji}$ for $1\leq i,j\leq n$ and $z_{ii}=1$ for $1\leq i\leq n$, and integers $p_1,...,p_n\geq 1$,
we let $\A_{z,p_1,...,p_n}$ be the universal \Cstar-algebra that is generated by unitaries $u_1,...,u_n$ such that
\begin{equation*}
u_i^{p_i}u_j^{p_j}=z_{ij}u_j^{p_j}u_i^{p_i}\quad \hbox{for}\ \ 1\leq i,j \leq n.
\end{equation*}
Assuming that at least one of the $p_i$ is at least two, when is $\A_{z,p_1,...,p_n}$ simple?

The most natural first approach to this question seems to be one along the lines in \cite{phillips1, Slawny}. When attempting this, it soon becomes clear that the higher exponents cause serious complications. It may therefore come as a pleasant surprise\textemdash at least it did so to the present authors\textemdash that, given the fact that the noncommutative tori are nonzero, a purely algebraic argumentation can be employed to show that $\A_{z,p_1,...,p_n}$ is never simple. The argument is so elementary that it could even easily be overlooked. After all, it does not appear to be immediate how the fact that $-1$ has two different complex square roots can be put to good use to show that the universal \Cstar-algebra that is generated by unitaries $u_1,u_2$ such that $u_1^2 u_2=-u_2 u_1^2$ is not simple; yet this is still the case. Since a similar argument will work in various other suitable contexts, it seems worthwhile to make it explicit in this short note.

\begin{proposition*}
Let $n\geq 2$ and suppose that $p_i\geq 2$ for some $i$ such that $1\leq i\leq n$. Then $\A_{z,p_1,...,p_n}$ is not simple.
\label{prop1}
\end{proposition*}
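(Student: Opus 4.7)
The plan is to construct, via the universal property, two surjective $*$-homomorphisms from $\A_{z,p_1,\ldots,p_n}$ onto two \emph{different} noncommutative tori, and then derive a contradiction from simplicity using only the fact that noncommutative tori are nonzero.

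After relabeling, I would assume $p_1\geq 2$, and for each pair $1\leq i,j\leq n$ choose $z'_{ij}\in\mathbb{T}$ with $(z'_{ij})^{p_ip_j}=z_{ij}$, $z'_{ii}=1$, and $z'_{ji}=\overline{z'_{ij}}$ (just pick a $p_ip_j$-th root of $z_{ij}$ for $i<j$ and reflect). Then in the noncommutative torus $\NCT_{z'}$, with generators $v_1,\ldots,v_n$ satisfying $v_iv_j=z'_{ij}v_jv_i$, I would compute
\[
v_i^{p_i}v_j^{p_j}=(z'_{ij})^{p_ip_j}v_j^{p_j}v_i^{p_i}=z_{ij}\,v_j^{p_j}v_i^{p_i},
\]
so the universal property of $\A_{z,p_1,\ldots,p_n}$ produces a surjective $*$-homomorphism $\pi':\A_{z,p_1,\ldots,p_n}\to\NCT_{z'}$ sending $u_i\mapsto v_i$. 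Next, since $p_1p_2\geq 2$, I would pick a $p_1p_2$-th root of unity $\eta\neq 1$ and set $z''_{12}=\eta z'_{12}$, $z''_{21}=\overline{z''_{12}}$, and $z''_{ij}=z'_{ij}$ for $\{i,j\}\neq\{1,2\}$. The same computation yields a second surjective $*$-homomorphism $\pi'':\A_{z,p_1,\ldots,p_n}\to\NCT_{z''}$.

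If $\A_{z,p_1,\ldots,p_n}$ were simple, then because $\NCT_{z'}$ and $\NCT_{z''}$ are nonzero, both $\ker\pi'$ and $\ker\pi''$ would be trivial, making $\pi'$ and $\pi''$ into $*$-isomorphisms. Composing would give a $*$-isomorphism $\NCT_{z'}\to\NCT_{z''}$ that sends each generator $v_i$ of $\NCT_{z'}$ to the corresponding generator $v_i$ of $\NCT_{z''}$; applying this isomorphism to the identity $v_1v_2=z'_{12}v_2v_1$ in $\NCT_{z'}$ would force $v_1v_2=z'_{12}v_2v_1$ in $\NCT_{z''}$ as well, contradicting the defining relation $v_1v_2=\eta z'_{12}v_2v_1$ there, since $\eta\neq 1$ and $v_2v_1$ is a unitary in the nonzero algebra $\NCT_{z''}$.

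The conceptual step, which is the only nontrivial one, is recognizing that one should deform the relations to obtain noncommutative tori as quotients: the freedom in extracting $p_ip_j$-th roots of $z_{ij}$ provides more than one such quotient precisely when some $p_i\geq 2$, and this is exactly the mechanism by which ``$-1$ has two different square roots'' gets leveraged in the illustrative example.
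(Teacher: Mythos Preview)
Your proof is correct and follows essentially the same approach as the paper's: both construct two distinct noncommutative-torus quotients by choosing two different solutions of $(\rho_{ij})^{p_ip_j}=z_{ij}$ and use simplicity to force both sets of commutation relations simultaneously, yielding a contradiction. The only cosmetic difference is that the paper pulls both relations back to $\A_{z,p_1,\ldots,p_n}$ and derives $(\rho_{ij}-\rho_{ij}')1=0$ there, whereas you compose the two isomorphisms and locate the contradiction inside $\NCT_{z''}$.
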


\begin{proof} We prove the proposition by contradiction, so assume that $\A_{z,p_1,...,p_n}$ is simple.
For $1\leq i,j\leq n$, choose $\rho_{ij}\in\mathbb T$ such that
\begin{equation}\label{eq:system}
\begin{cases}
\rho_{ij}^{p_ip_j}=z_{ij} &\hbox{for}\ 1\leq i,j\leq n;\\
\overline \rho_{ij}=\rho_{ji} &\hbox{for}\ 1\leq i,j\leq n;\\
\rho_{ii}=1 &\hbox{for}\ 1\leq i\leq n.
\end{cases}
\end{equation}

Let $\NCT_\rho$ be the $n$-dimensional noncommutative torus that is generated by unitaries $v_1,...,v_n$ such that $v_iv_j=\rho_{ij}v_jv_i$ for $1\leq i,j \leq n$. Since $v_i^{p_i}v_j^{p_j}=\rho_{ij}^{p_i p_j}v_j^{p_j} v_i^{p_i}=z_{ij}v_j^{p_j} v_i^{p_i}$ for $1\leq i,j\leq n$, there exists a surjective $^\ast$-homomorphism $\pi:\A_{z,p_1,...,p_n}\to\NCT_\rho$ such that $\pi(u_i)=v_i$ for $1\leq i\leq n$. Since $\NCT_\rho\not= \{\,0\,\}$, we see that $\A_{z,p_1,...,p_n}\neq\{\,0\,\}$, and also that $\ker(\pi)\neq\A_{z,p_1,...,p_n}$. Since we have assumed that $\A_{z,p_1,...,p_n}$ is simple, we conclude from the latter inequality that $\ker(\pi)=\{\,0\,\}$, so that $\pi$ is an isomorphism between $\A_{z,p_1,...,p_n}$ and $\NCT_\rho$. As a consequence, we see that $u_iu_j=\rho_{ij}u_ju_i$  for $1\leq i,j\leq n$.

Under our assumptions, there are $i$ and $j$ such that $1\leq i\neq j \leq n$ and such that the corresponding exponent $p_ip_j$ in the first line of  \eqref{eq:system} is at least 2. Therefore, there exists a solution matrix $(\rho_{ij}^\prime)$ of \eqref{eq:system} that is different from our chosen solution matrix $(\rho_{ij})$.  By the same argument as above, we also have $u_iu_j=\rho_{ij}^\prime u_ju_i$, so that $\rho_{ij}v_jv_i=\rho_{ij}^\prime v_jv_i$ for $1\leq i,j\leq n$. This implies that $(\rho_{ij}-\rho_{ij}^\prime)1_{\A_{z,p_1,...,p_n}}=0_{\A_{z,p_1,...,p_n}}$ for $1\leq i,j\leq n$. On choosing $i$ and $j$ such that $\rho_{ij}\neq\rho_{ij}^\prime$, we find that $1_{\A_{z,p_1,...,p_n}}=0_{\A_{z,p_1,...,p_n}}$. We conclude that $\A_{z,p_1,....,p_n}=\{\,0\,\}$. This contradiction shows that $\A_{z,p_1,...,p_n}$ is not simple.
\end{proof}

\begin{remark*}\quad
\begin{enumerate}
\item In spite of the elementary nature of the above proof, the result in itself is still not trivial, as it is based on the fact, used in an essential way in the proof, that the noncommutative tori are nonzero.

\item
One can vary the definition of the algebra $\A_{z,p_1,...,p_n}$ in the proposition by:
\begin{enumerate}
\item requiring that some of the generators are isometries, or a partial isometries, and/or
\item removing some (or even all) of the relations $u_i^{p_i}u_j^{p_j}=z_{ij}u_j^{p_j}u_i^{p_i}$.
\end{enumerate}
 Since the resulting universal \Cstar-algebra has $\A_{z,p_1,...,p_n}$ as a quotient that is not simple, it is not simple itself.

For example, for $z\in\mathbb T$, let $\mathcal{B}_z$ be the universal \Cstar-algebra that is generated by a partial isometry $v_1$, an isometry $v_2$, and a unitary $v_3$ such that $v_3v_2=z v_2v_3$. Then $\mathcal{B}_z$ is not simple. Indeed, the universal \Cstar-algebra that is generated by unitaries $u_1,u_2,u_3$ such that
\begin{align*}
u_3u_1^2&=u_1^2u_3\\
 u_3u_2 &=z u_2u_3\\
 u_2u_1^2 &=u_1^2 u_2
 \end{align*}
    is a nonsimple quotient of $\mathcal B_z$. The higher exponents, responsible for the nonsimplicity of $\mathcal B_z$, are not present in the initial relations, but they do occur in those for the quotient.
\end{enumerate}
\end{remark*}

In general, let us assume that we have a collection $\{\,\mathcal R_i : i\in I\,\}$ of sets $\mathcal R_i$ of relations for a common set of symbols $\mathcal G$ for elements of a \Cstar-algebra, such that each set of relations $\mathcal R_i$ implies one fixed set of relations $\mathcal R$. Let us also assume that the universal \Cstar-algebra $\Cstar(\mathcal R_i)$ for each set of relations $\mathcal R_i$ exists, and is nonzero. Then the universal \Cstar-algebra $\Cstar(\mathcal R)$ also exists, has each $\Cstar(\mathcal R_i)$ as a quotient, and is nonzero. If $\Cstar(\mathcal R)$ is simple, then these quotient maps are isomorphisms. Since they send generators to generators, the relations from \emph{all} sets $\mathcal R_i$ will then hold for the generators of $\Cstar(\mathcal R)$. If one can show that the \emph{simultaneous} validity of these sets of relations (each of which results from a different quotient) leads to a contradiction, this will prove that $\Cstar(\mathcal R)$ is not simple.

The above proof of the proposition employs this technique of working with various quotients. As a further example, still using unitaries, consider the universal \Cstar-algebra $\A$ that is generated by unitaries $u$ and $v$ satisfying $u^4 v=-v^3 u^7 v^2 u^7$. We shall show that $\A$ is not simple. To this end, consider the universal \Cstar-algebras $\A_\pm$ that are generated by unitaries $u$ and $v$ such that $u^2v=\pm \ui v^3 u^7$. Then $\A_\pm\neq\{\,0\,\}$. Indeed, let $W$ be any nonzero unitary operator on a Hilbert space, and put $U_\pm=\ue^{\mp\pi\ui/10}W^2$ and $V_\pm=W^{-5}$. Then $U_\pm$ and $V_\pm$ are nonzero unitary operators satisfying the relations for $\A_\pm$. Consequently, $\A_\pm\neq\{\,0\,\}$. Now note that the relations for $\A_+$ and $\A_-$ both imply the relation for $\A$, so that $\A$ has $\A_+$ and $\A_-$ as canonical quotients. In particular, $\A\neq\{\,0\,\}$. Assuming that  $\A$ is simple, one finds that $u^2v=\ui v^3 u^7$ as well as $u^2v=-\ui v^3 u^7$ for $u,v\in\A$. This leads to $2\ui 1_\A=0_\A$, so that $1_\A=0_\A$ and $\A=\{\,0\,\}$. The latter contradiction shows that $\A$ cannot be simple.

\vskip .3cm

\subsection*{Acknowledgements}
We thank the  anonymous referee for the precise reading of the manuscript, and for providing the argument showing that the algebras $\A_{\pm}$ in the final paragraph are nonzero.
The last author was partially funded by FCT/Portugal through projects UID/MAT/04459/2013 and EXCL/MAT-GEO/0222/2012.



\end{document}